\theoremstyle{thmstyleone}%
\newtheorem{theorem}{Theorem}
\newtheorem{proposition}[theorem]{Proposition}%
\newtheorem{lemma}[theorem]{Lemma}%
\theoremstyle{thmstyletwo}%
\theoremstyle{thmstylethree}%
\begin{document}

\title[On the Computation of $\log K_\nu$]{On the Computation of  the Logarithm of the Modified Bessel Function of the Second Kind}


\author{\fnm{Remi} \sur{Cuingnet}}\email{remi.cuingnet@veolia.com}


\affil{\orgname{Veolia}, \country{France}}


\abstract{
The modified Bessel function of the second kind $K_\nu$ appears in a wide variety of applied scientific fields. While its use is greatly facilitated by an implementation in most numerical libraries, overflow issues can be encountered especially for large value of $\nu$. After giving some necessary and sufficient conditions for their occurrences, this technical note shows that they can mostly be avoided by directly computing the logarithm of $K_\nu$ thanks to a simple and stable forward recursion. A statistical examples based on the Gil-Pelaez inversion formula is given to illustrate the recursive method.
}

\keywords{modified Bessel function of the second kind,
     overflow,
     logarithm,
     forward recursion,
     Student's t-distribution,
     characteristic function}
     
\pacs[MSC Classification]{MSC 65Y04}

\maketitle  

\section{Introduction}
\label{sec:intro}



The \textit{modified Bessel function of the second kind} $K_\nu$~\cite{abramowitz1964handbook} appears in a large number of applied scientific fields such as in statistics~\cite{bhattacharya1967modified,ROBERT1990155,NADARAJAH2016201,gaunt2019simple} where it arises in mixtures of common distributions~\cite{bhattacharya1967modified}. For instance, it arises in the probability density functions of the \textit{Generalised hyperbolic distribution}, of the \textit{K-distribution} and of the \textit{Generalized inverse Gaussian distribution}.

This function is implemented in most numerical libraries~\cite{2020SciPy-NMeth,ISO14882} following~\cite{amos1986algorithm,amos1985subroutine,campbell1980temme}.
Its computation is based on a forward recurrence 
with the relation~\cite{abramowitz1964handbook}:
\begin{equation}
    \label{eq:bessel_recurrence}
    K_{\nu+1}(z) =K_{\nu-1}(z) + 2\frac{\nu}{z}K_{\nu}(z)
\end{equation}
where the initial terms of the recurrence are computed either using power series for small values of~$z$~\cite{temme1975numerical} or based on Miller's recurrence algorithm~\cite{olver1964error,gil2007numerical} for larger values of~$z$.

As stated in~\cite{xue2018recursive}, a direct evaluation of $K_\nu(z)$ may easily cause overflow or underflow for large order $ \lvert \nu \rvert $ and for extreme argument $z$. Indeed, 
$K_\nu(z)$ has a rapid decay as $\lvert z\rvert$ grows, which may cause floating point underflow~\cite{xue2018recursive,press10numerical}. More specifically, for a fixed order $\nu$, the asymptotic behavior at infinity is~\cite{abramowitz1964handbook,watson1995treatise}:
\begin{equation}
    \label{eq:bessel_near_infty}
    K_\nu(z)
    \underset{\|z\|\to\infty}{\sim} K_{\frac12}(z) = e^{-z}\sqrt{\frac{\pi}{2z}},
\end{equation}
\noindent This is the reason why, the exponentially scaled function $\tilde{K}$ defined as
\begin{equation}
    \label{eq:exp_scaled_bessel}
    \tilde{K}_\nu(z):=e^z K_\nu(z)
\end{equation}
is often  directly implemented in most numerical packages~\cite{amos1986algorithm,press10numerical}.

Numerical issues can also be encountered for small values of $\lvert z\rvert$ especially for large orders $\nu$. As a matter of fact, for fixed $\nu$ ($\Re({\nu}) > 0$),  $K_\nu$ verifies (equations 9.6.8 and 9.6.9 in~\cite{abramowitz1964handbook}): 
\begin{equation}
    \label{eq:bessel_near_0}
    K_{0}(z)
    \underset{z \to 0}{\sim} -\log z
    \quad\textrm{and}\quad
    \forall \nu >0,\ 
    K_{\nu}(z)
    \underset{z \to 0}{\sim}
    \frac {\Gamma (\nu )}{2}
    \left({\dfrac {2}{z}}\right)^{\nu}
\end{equation}
To avoid these numerical issues, Xue and Deng~\cite{xue2018recursive} proposed a direct evaluation of  logarithmic derivatives, ratios  and  products of the modified Bessel function of the second kind. This note extends their ideas and proposes a simple direct implementation of $\log K_\nu(z)$ using a forward recursion.  

 %
 
 After giving necessary and sufficient conditions to avoid overflow and underflow for $K_\nu(z)$ in section~\ref{sec:under_over}, the direct implementation of $\log K_\nu(z)$ is detailed in section~\ref{sec:log}. The numerical stability of the recursion is assessed. The section further details how this limits the overflow and underflow problems. A numerical illustration is then given in section~\ref{sec:illustration}, where $\log K_\nu$ is used to compute the characteristic function of Student's $t$-distribution.

\section{Overflow and underflow of $K_\nu(z)$}
\label{sec:under_over}

This section focus on sufficient conditions and necessary conditions for $K_\nu(z)$
to be within the range of positive normal floating-point numbers (section~\ref{subsec:float}) based on  Yang and Chu's inequalities~\cite{yang2021monotonicity}. Section~\ref{sec:overflow} presents the results for overflow while the results for underflow are presented in section~\ref{sec:underflow}.

\subsection{Floating-point arithmetic.}
\label{subsec:float}
Given a floating-point system, let $P$ be the precision of the significand (in base 2). Let $L$ and $U$ be  the smallest and largest exponents respectively. Then, the smallest positive normalized floating-point number is the underflow level $B_\textrm{UFL} = 2^L$. The smallest positive denormalized number is $B_\textrm{SDN} = 2^{L-P}$.
The largest floating-point number is the overflow level $B_\textrm{OFL} = \left(1-2^{-P}\right)2^{U+1}$. The values for single-precision and double-precision floating point arithmetic~\cite{kahan1996} are reported in table~\ref{tab:float}.


\begin{table}[!ht]
    \centering
    \caption{Smallest positive denormalized number $B_\textrm{SDN}$, underflow level $B_\textrm{UFL}$ and overflow level $B_\textrm{OFL}$ function of the significant precision $P$, smallest exponent $L$ an largest exponent $U$ for single-precision and double precision floating point arithmetic~\cite{kahan1996}.}
    \begin{tabular}{cccc}
    \toprule
    \textbf{Name}&
    \textbf{Expression}&
    \textbf{Single Precision}&
    \textbf{Double Precision}\\
    \midrule
    Significand precision & P & 23 & 52\\
    \\
    Smallest Exponent & L &-126 &-1022\\
    \\
    Largest Exponent & U & 127 &1023\\
    \\
    Smallest positive   & $B_\textrm{SDN}$ &  $2^{-149}$                  & $2^{-1075}$\\
    denormalized number &  $=2^{L-P}$      &  $\approx1.401\cdot 10^{-45}$& $\approx4.941\cdot 10^{-324}$\\
    
    \\
    Underflow level & $B_\textrm{UFL}$   & $2^{-126}$                    &$2^{-1022}$\\
    &  $=2^{L}$      & $\approx1.175\cdot 10^{-38}$  &$\approx2.225\cdot 10^{-308}$\\
    \\
    Overflow level & $B_\textrm{OFL}$& $\left(1-2^{-23}\right)2^{128}$   & $\left(1-2^{-52}\right)2^{1024}$\\
    &                          $=\left(1-2^{-P}\right)2^{U+1}$      & $\approx3.403\cdot10^{38}$        & $\approx1.797\cdot10^{308}$\\
    \bottomrule
    \end{tabular}
    \label{tab:float}
\end{table}

The goal is then to get necessary conditions and sufficient conditions on $\nu$ and $z$ for $K_\nu(z)\leq B_\textrm{OFL}$, $K_\nu(z)\geq B_\textrm{UFL}$ or $K_\nu(z)\leq B_\textrm{SDN}$. To this end, Yang and Chu's inequalities \cite{yang2021monotonicity} are used.

\subsection{Yang and Chu's inequalities.}
While asymptotic analyses (equations~\eqref{eq:bessel_near_infty} and~\eqref{eq:bessel_near_0}) help getting an intuition on the numerical behavior, they do not provide with any bounds. A non-asymptotic characterization was given for real values by~\cite{miller2001completely} who proved the complete monotonicity
of 
$
    z~\mapsto~z^{\min{\left(\frac12, \nu\right)}} e^z K_\nu(z)
$
for non-negative $\nu$. More recently, \cite{yang2021monotonicity} gave the following upper and lower bounds for $z>0$ and $\nu\geq1$:
\begin{eqnarray}
    \label{eq:yang_bounds1}
    \displaystyle
    K_{\frac12}(z)\cdot\left({1+\frac{a_1}{z}}\right)^{\nu-\frac12}
    \quad<&\displaystyle
    K_\nu(z)
    &<\quad\displaystyle
    K_{\frac12}(z)\cdot\left({1+\frac{b_1}{z}}\right)^{\nu-\frac12}
    \\
    \label{eq:yang_bounds2}
    \displaystyle
    \left({1+a_2z}\right)^{\nu-\frac12}
    \quad<&\displaystyle
    \frac{2}{\Gamma(\nu)}\left(\frac z2\right)^\nu e^z 
    \cdot
    K_\nu(z)
    &<\quad\displaystyle
    \left({1+b_2z}\right)^{\nu-\frac12}
\end{eqnarray}
with
\begin{eqnarray}
    \label{eq:def_ab1}
    a_1 := \min\left\{{
    c_0,\ \frac\nu2 + \frac14
    }\right\}
    &\quad\textrm{and}\quad&
    b_1 := \max\left\{{
    c_0,\ \frac\nu2 + \frac14
    }\right\}
    \\
    \label{eq:def_ab2}
    a_2 := \frac{1}{\max\left\{{
    c_0,\ \nu-\frac12
    }\right\}}
    &\quad\textrm{and}\quad&
    b_2 := \frac{1}{\min\left\{{
    c_0,\ \nu-\frac12
    }\right\}}
\end{eqnarray}
where 
\begin{equation}
    \label{eq:def_c0}
    c_0 := 2\left({\frac{\Gamma(\nu)}{\sqrt\pi}}\right)^{\frac2{2\nu-1}}.
\end{equation}
Inequality~\eqref{eq:yang_bounds1} is the tightest when $z$ is large. On the contrary, inequality~\eqref{eq:yang_bounds2} is to be used for small value of $z$. The case $\nu\leq1$ is handled in~\cite{yang2017approximating}. In the following, these inequalities are used to derive sufficient or necessary conditions to avoid overflow or underflow.

\subsection{Arithmetic overflow.}
    \label{sec:overflow}

\paragraph{Necessary condition.} 
To avoid overflow, a necessary condition  is given by the following proposition.
\begin{proposition}
\label{prop:overflow_necessary}
For $B>0$ and $\displaystyle z>0$, if $\nu\geq1$ verifies:
\begin{equation}
    \label{eq:overflow_necessary}
         \nu\geq\displaystyle
    \frac12
    +
    \frac{\log(B) + z_0 - \frac12\log(\frac{\pi}{z_0e})}{
    W_0\left({\frac{2}{z_0e}
    \left({
    \log(B) + z_0 - \frac12\log\left({\frac{\pi}{z_0e}}\right)}\right)
    }\right)
    }
    \quad\textrm{with}\quad
    z_0 = \max(z, \frac{\pi}{B^2e})
\end{equation}

\end{proposition}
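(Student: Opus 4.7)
The plan is to prove the contrapositive: if $\nu \geq 1$ satisfies \eqref{eq:overflow_necessary}, then $K_\nu(z) > B$ (overflow). Stating this as a necessary condition for $K_\nu(z) \leq B$ yields the proposition. I would organize the argument in four steps.

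First, I would use the fact that $K_\nu(\cdot)$ is strictly decreasing on $\mathbb{R}_{>0}$ to reduce to $z_0 = \max(z, \pi/(B^2 e)) \geq z$: since $K_\nu(z) \geq K_\nu(z_0)$, it suffices to prove $K_\nu(z_0) > B$. The specific truncation value $\pi/(B^2 e)$ will be seen in the last step to be exactly what guarantees the Lambert $W$ argument is positive, placing it in its principal branch.

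Second, I would apply the left side of Yang and Chu's inequality \eqref{eq:yang_bounds2}. Since $(1 + a_2 z_0)^{\nu - 1/2} \geq 1$ for $\nu \geq 1/2$ and $z_0 > 0$, this yields the simpler lower bound $K_\nu(z_0) > \frac{\Gamma(\nu)}{2}(2/z_0)^\nu e^{-z_0}$. Combining this with Robbins' Stirling lower bound $\Gamma(\nu) \geq \sqrt{2\pi}\,\nu^{\nu - 1/2}e^{-\nu}$ (valid for $\nu \geq 1$) and taking logarithms, I would regroup the Stirling $-\nu$ with the $\nu\log(2/z_0)$ to form $\nu \log(2/(e z_0))$, then split $\nu = u + 1/2$ with $u = \nu - 1/2$. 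After cancellation, the bound compactly rewrites as
\[
\log K_\nu(z_0) > u\,\log\!\bigl(2\nu/(z_0 e)\bigr) + \tfrac12 \log\!\bigl(\pi/(z_0 e)\bigr) - z_0,
\]
and since $\nu > u > 0$, replacing $\nu$ by $u$ inside the logarithm only weakens the bound, giving $\log K_\nu(z_0) > u\log(2u/(z_0 e)) + \tfrac12\log(\pi/(z_0 e)) - z_0$.

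Third, I would invert via Lambert $W$. Setting $\tilde D = \log B + z_0 - \tfrac12 \log(\pi/(z_0 e))$, it suffices to show $u\log(2u/(z_0 e)) \geq \tilde D$. The substitution $v = 2u/(z_0 e)$ reduces this to $v\log v \geq 2\tilde D/(z_0 e)$. Using the fact that $v \mapsto v\log v$ is strictly increasing on $[1/e, \infty)$ with inverse $c \mapsto c/W_0(c)$, the condition becomes $v \geq (2\tilde D/(z_0 e))/W_0(2\tilde D/(z_0 e))$, i.e.\ $\nu \geq \tfrac12 + \tilde D/W_0(2\tilde D/(z_0 e))$, which is exactly \eqref{eq:overflow_necessary}. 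A quick check at $z_0 = \pi/(B^2 e)$ shows $\tilde D = \pi/(B^2 e) > 0$, confirming that the truncation in Step~1 guarantees a positive Lambert $W$ argument.

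The main obstacle is the algebraic bookkeeping in the second step: the appearance of the factor $1/e$ inside $\log(2u/(z_0 e))$ and inside $\log(\pi/(z_0 e))$ is essential for the exact form of \eqref{eq:overflow_necessary}, and it emerges only after one carefully combines Robbins' $-\nu$ contribution with the $\nu \log(2/z_0)$ from the Yang--Chu bound and then rewrites $\nu = u + 1/2$, distributing the $1/2$ between the two resulting logarithms. A secondary subtlety is justifying the Stirling inequality $\Gamma(\nu) \geq \sqrt{2\pi}\,\nu^{\nu-1/2}e^{-\nu}$ for non-integer $\nu \geq 1$ (which follows from the positivity of the Stirling remainder series).
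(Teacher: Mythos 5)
Your proposal is correct and follows essentially the same route as the paper's proof: reduction to $z_0=\max(z,\pi/(B^2e))$ by monotonicity, the lower bound $K_\nu(z_0)>\frac{\Gamma(\nu)}{2}(2/z_0)^\nu e^{-z_0}$ from inequality~\eqref{eq:yang_bounds2} with $a_2>0$, a Stirling-type lower bound on $\Gamma(\nu)$, the shift from $\nu$ to $\nu-\frac12$ in the exponent, and the Lambert-$W_0$ inversion. The only cosmetic difference is that you invoke Robbins' Stirling bound $\Gamma(\nu)\geq\sqrt{2\pi}\,\nu^{\nu-1/2}e^{-\nu}$ where the paper derives the identical estimate from Karatsuba's inequalities, so the two arguments coincide.
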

\noindent For instance, for $z=1$, 
$\nu \geq 29.8$ (resp. $\nu \geq 151.5$) results in an overflow with single-precision (resp. double-precision).
This necessary condition to avoid overflow for single-precision and double precision is plotted in figure~\ref{fig:overflow_underflow}.


\begin{proof}

Since, $K_\nu$ is decreasing, 
one may assume that $z\geq \frac{\pi}{B^2e}$. 
In practice, let us assume that $z> \frac{\pi}{B^2e})$. 
The case $z= \frac{\pi}{B^2e})$ will then be directly obtained by continuity.

\noindent
Since $a_2$ is defined in equation~\eqref{eq:def_ab2} is positive, inequality~\eqref{eq:yang_bounds2} yields:
\begin{equation}
\label{eq:start_proof_over}
K_\nu(z)>\frac{\Gamma(\nu)}2e^{-z}
\left(\frac 2z\right)^\nu    
\end{equation}

\noindent
The Gamma function can be bounded from below using Karatsuba's inequalities~\cite{karatsuba2001asymptotic}. Since $\nu>0$ the Gamma function verifies:
\begin{equation}
    \left({8\nu^3 + 4\nu^2 + \nu + \frac{1}{100}}\right)^{\frac16}
    <
    \frac{\nu\Gamma(\nu)}{\sqrt{\pi}}
    \left({\frac{e}{\nu}}\right)^\nu
    <
    \left({8\nu^3 + 4\nu^2 + \nu + \frac{1}{30}}\right)^{\frac16}
\end{equation}
Then for $\nu\geq1$,
\begin{equation}
    \label{eq:gamma_bounds}
    \sqrt{\frac{2}{e}}
    <
    \frac{\Gamma(\nu)}{\sqrt{\pi}}
    \left({\frac{e}{\nu}}\right)^{\nu-\frac12}
    <
    1
\end{equation}
Plugged into inequality~\eqref{eq:start_proof_over}, this gives:
\begin{equation}
 \label{eq:proof3_1}
K_\nu(z)
\quad>\quad
\frac12\sqrt{\frac{2\pi}{e}}\left({\frac{\nu}{e}}\right)^{\nu-\frac12}
e^{-z}
\left(\frac 2z\right)^\nu
\quad>\quad
e^{-z}
\sqrt{\frac{\pi}{ez}}
\left({\frac{2(\nu-1/2)}{ez}}\right)^{\nu-\frac12}
\end{equation}
%
%

\noindent
Since $W_0(x) < x$ for all $x>-\frac1e$, the constraint $z>\frac{\pi}{B^2e}$ yields to:
\begin{equation*}
     \log(B) + z - \frac12\log(\frac{\pi}{ze}) > 0
\end{equation*}
\noindent
Then, inequality~\eqref{eq:overflow_necessary} leads to:
\begin{eqnarray}
    \nonumber
      \frac2{ze}(\nu-1/2)&\geq&\displaystyle  
  \exp\left(
  {
    W_0\left({\frac{2}{ze}
    \left({
    \log(B) + z - \frac12\log\left({\frac{\pi}{ze}}\right)}\right)
    }\right)
    }\right)
        \\\nonumber
      \left({\frac2{ze}(\nu-1/2)}\right)\log\left({\frac2{ze}(\nu-1/2)}\right)&\geq&\displaystyle  
    \frac{2}{ze}
    \left({
    \log(B) + z - \frac12\log\left({\frac{\pi}{ze}}\right)}\right)
    \\\label{eq:proof3_2}
      \left({\frac2{ze}(\nu-1/2)}\right)^{\left({\nu-1/2}\right)}
      &\geq&\displaystyle  
    B e^z \sqrt{\frac{ze}{\pi}}
\end{eqnarray}
Combining inequalities~\eqref{eq:proof3_1} and~\eqref{eq:proof3_2} proves proposition~\ref{prop:overflow_necessary}.
\end{proof}

\paragraph{Sufficient condition.}
A sufficient condition for non-overflow is then given by the following proposition.
\begin{proposition}
\label{prop:overflow_sufficient}
For $z>0$, $B>0$ 
and $\nu\geq\displaystyle \frac{e}{2(4-e)}$, if

\begin{equation}
    \label{eq:overflow_sufficient}
    \nu \leq \frac{\log x_0}{W_0\left({\frac{2}{ze}\log x_0}\right)}
    -\frac{ze}{2}
    \quad\textrm{with}\quad
    x_0 :=\frac{B}{K_{\frac12}(z)}\left({1 + \frac{2}{ze}}\right)^{\frac{1+ze}{2}}
\end{equation}
where $W_0$ is the Lambert's function, 
then $K_\nu(z) < B$.
\end{proposition}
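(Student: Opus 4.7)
The plan mirrors the structure of the proof of Proposition~\ref{prop:overflow_necessary}, but starts from an upper bound on $K_\nu(z)$ rather than a lower one. The natural candidate is the upper bound in Yang and Chu's inequality~\eqref{eq:yang_bounds1}, since it features $K_{\frac12}(z)$ explicitly, which is precisely what appears in $x_0$. So the first step is to write
\begin{equation*}
K_\nu(z)\; <\; K_{\frac12}(z)\left(1+\frac{b_1}{z}\right)^{\nu-\frac12}
\end{equation*}
and then to replace $b_1=\max\{c_0,\nu/2+1/4\}$ by the simpler quantity $2\nu/e$. For the $c_0$ part, the upper bound $\Gamma(\nu)/\sqrt{\pi}<(\nu/e)^{\nu-1/2}$ given in~\eqref{eq:gamma_bounds} immediately yields $c_0<2\nu/e$. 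For the $\nu/2+1/4$ part, an elementary computation shows that $\nu/2+1/4\leq 2\nu/e$ is equivalent to $\nu\geq e/(2(4-e))$, which explains precisely the threshold imposed in the statement. Under that threshold we therefore obtain the cleaner bound
\begin{equation*}
K_\nu(z)\; <\; K_{\frac12}(z)\left(1+\frac{2\nu}{ez}\right)^{\nu-\frac12}.
\end{equation*}

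The second step is to show that the hypothesis~\eqref{eq:overflow_sufficient} forces the right-hand side to be $\leq B$. Taking logarithms, this amounts to proving
\begin{equation*}
\left(\nu-\tfrac12\right)\log\!\left(1+\tfrac{2\nu}{ez}\right)\;\leq\;\log\!\left(\tfrac{B}{K_{\frac12}(z)}\right).
\end{equation*}
The key observation is that the function $h(\nu):=(\nu+ez/2)\log(1+2\nu/(ez))$ has derivative $\log(1+2\nu/(ez))+1>0$, so $h$ is strictly increasing on $(0,\infty)$, and the equation $h(\nu)=\log x_0$ can be solved in closed form using the defining identity $W_0(y)e^{W_0(y)}=y$. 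A direct substitution shows that its unique positive solution is exactly $\nu^\star=\log(x_0)/W_0\!\bigl(2\log(x_0)/(ez)\bigr)-ez/2$, the right-hand side of~\eqref{eq:overflow_sufficient}. By monotonicity of $h$, the hypothesis on $\nu$ is thus equivalent to $h(\nu)\leq\log x_0$.

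The final step is to convert this bound on $h(\nu)$ into the desired bound. Unfolding the definition of $x_0$ gives
\begin{equation*}
\left(\nu+\tfrac{ez}{2}\right)\log\!\left(1+\tfrac{2\nu}{ez}\right)
\;\leq\;
\log\!\left(\tfrac{B}{K_{\frac12}(z)}\right)+\tfrac{1+ez}{2}\log\!\left(1+\tfrac{2}{ez}\right).
\end{equation*}
The difference between the left-hand coefficient $\nu+ez/2$ and the target coefficient $\nu-1/2$ is exactly $(1+ez)/2$. Hence it suffices to check that $(1+ez)/2\cdot\log(1+2\nu/(ez))\geq(1+ez)/2\cdot\log(1+2/(ez))$, which holds as soon as $\nu\geq 1$. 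Since $e/(2(4-e))>1$, this is automatic from the hypothesis, and the claim follows.

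The main obstacle I expect is the third step: verifying cleanly, via the functional identity for $W_0$, that the critical value of $\nu$ implicitly defined by $h(\nu)=\log x_0$ matches the explicit closed form given in the statement, and tracking the exact constants so that the residual term $(1+ez)/2\cdot\log(1+2/(ez))$ in $x_0$ is exactly what is needed to bridge $h(\nu)$ and $(\nu-1/2)\log(1+2\nu/(ez))$. The rest is routine monotonicity and asymptotic bookkeeping.
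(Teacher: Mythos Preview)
Your proposal is correct and follows essentially the same route as the paper: both start from the Yang--Chu upper bound~\eqref{eq:yang_bounds1}, replace $b_1$ by $2\nu/e$ using the threshold $\nu\ge e/(2(4-e))$, split the exponent $\nu-\tfrac12=(\nu+\tfrac{ez}{2})-\tfrac{1+ez}{2}$, and then invert via the Lambert~$W_0$ identity, using $\nu\ge 1$ to absorb the residual $(1+ez)/2\cdot\log(1+2/(ez))$ term that defines $x_0$. Your packaging through the auxiliary function $h(\nu)=(\nu+ez/2)\log(1+2\nu/(ez))$ and its explicit derivative makes the monotonicity/inversion step a bit more transparent than the paper's version, but the argument is the same.
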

\noindent For instance, for $z=1$, 
$\nu \leq 27.7$ (resp. $\nu \leq 149.4$) is a sufficient condition to avoid 
overflow with single-precision (resp. double-precision). This sufficient condition to avoid overflow for single and double precision is plotted in figure~\ref{fig:overflow_underflow}.

\begin{proof}
Bounds on $c_0$ can be directly obtained from inequality~\eqref{eq:gamma_bounds}:
\begin{equation}
    \label{eq:bound_c0}
    2\frac\nu{e}
    \left({
        \frac{2}{e}
    }\right)^{\frac1{2\nu-1}}
    <
    c_0=2\left({
        \displaystyle\frac{\Gamma(\nu)}{\sqrt{\pi}}
    }\right)^{\frac1{\nu-\frac12}}
    <
    2\frac\nu{e}
\end{equation}
According to inequality~\eqref{eq:yang_bounds1},  
for $\nu\geq\displaystyle \frac{e}{2(4-e)}$,
\begin{eqnarray}
    \nonumber
    K_\nu(z) &<& K_{\frac12}(z) \left({ 1+ \frac{2\nu}{ze}
    }\right)^{(\nu-\frac12)}
    =
    \left({ 1+ \frac{2\nu}{ze}}\right)^{(\nu+\frac{ze}{2})}
    \left({ 1+ \frac{2\nu}{ze}}\right)^{-(\frac{ze}{2}+\frac12)} K_{\frac12}(z)
    \\
    \label{eq:overflow_sufficient_proof}
    K_\nu(z)  &< &
    \left({\frac{2}{ze}\left(\nu + \frac{ze}{2}\right)}\right)^{(\nu+\frac{ze}{2})}
    \left({ 1+ \frac{2}{ze}}\right)^{-(\frac{ze}{2}+\frac12)} K_{\frac12}(z)
    \quad\quad\textrm{(since $\nu\geq1$)}
\end{eqnarray}
Since by definition, $\nu$ satisfies inequality~\eqref{eq:overflow_sufficient}, 
\begin{eqnarray*}
    \frac{2}{ze}\left({\nu + \frac{ze}{2}}\right) 
    &\leq& \frac{\frac{2}{ze}\log x_0}{W_0\left(\frac{2}{ze}\log x_0\right)} = 
    \exp\left[{W_0\left(\frac{2}{ze}\log x_0\right)}\right]
    \\
    \left({\frac{2}{ze}\left({\nu + \frac{ze}{2}}\right) }\right)^{\left({\nu + \frac{ze}{2}}\right)}
    &\leq&
    x_0 = \frac{B}{K_{\frac12}(z)}\left({1 + \frac{2\nu}{ze}}\right)^{\frac{1+ze}{2}}
\end{eqnarray*}
Hence, using inequality~\eqref{eq:overflow_sufficient_proof}, $K_\nu(z) < B$.
\end{proof}

\subsection{Arithmetic underflow.}
    \label{sec:underflow}

\paragraph{Necessary condition.}
A necessary condition to avoid underflow can be then obtained with the following proposition.
\begin{proposition}
\label{prop:underflow_necessary}
For $\nu\geq1$ and $\displaystyle0<B\leq2\sqrt{\frac{\pi}{2e}}$, if $z$ verifies
\begin{equation}
    \label{eq:underflow_necessary}
    \left\{{
    \begin{array}{rcl}
    2 z&\geq& \displaystyle\log x_0 - \log\log x_0 + \frac{e}{e-1}\frac{\log\log x_0}{\log x_0}
    \quad\quad\textrm{with}\quad
    x_0:=\frac{2^{2\nu-1}\pi}{B^2}
    \\
    z&\geq&\displaystyle
    \max\left\{{
    \frac2e\nu,\ \frac\nu2+\frac14
    }\right\}
    \end{array}
    }\right.
\end{equation}
then
$K_\nu(z) < B$.
\end{proposition}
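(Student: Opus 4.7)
The plan is to mirror the structure of the sufficient-condition proof (Proposition~\ref{prop:overflow_sufficient}) but with the roles of upper and lower bounds swapped and with Yang--Chu's inequality~\eqref{eq:yang_bounds1} playing the leading role rather than~\eqref{eq:yang_bounds2}, since we now want to \emph{upper bound} $K_\nu(z)$ in the large-$z$ regime dictated by the second inequality of~\eqref{eq:underflow_necessary}.

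First, I would bound the constant $c_0$ from~\eqref{eq:def_c0} using the right-hand inequality in~\eqref{eq:gamma_bounds} to get $c_0 < 2\nu/e$. Combined with the definition of $b_1$ in~\eqref{eq:def_ab1}, this gives
\begin{equation*}
b_1 \;\leq\; \max\left\{\tfrac{2}{e}\nu,\ \tfrac{\nu}{2}+\tfrac14\right\}.
\end{equation*}
The second line of~\eqref{eq:underflow_necessary} is exactly $z \geq \max\{\tfrac{2}{e}\nu,\ \tfrac{\nu}{2}+\tfrac14\}$, so $b_1/z \leq 1$. Substituting into the upper bound in~\eqref{eq:yang_bounds1} gives
\begin{equation*}
K_\nu(z) \;<\; K_{\frac12}(z)\,\bigl(1+\tfrac{b_1}{z}\bigr)^{\nu-\frac12} \;\leq\; e^{-z}\sqrt{\frac{\pi}{2z}}\,\cdot 2^{\nu-\frac12}.
\end{equation*}

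Next I would reduce the target inequality $K_\nu(z) < B$ to a condition on $2z$. Squaring and rearranging the display above, it suffices to prove $2z\,e^{2z} > 2^{2\nu-1}\pi/B^2 = x_0$, which is equivalent to $2z > W_0(x_0)$. Here the assumption $B \leq 2\sqrt{\pi/(2e)}$ is exactly what is needed, since together with $\nu\geq1$ it gives $x_0 \geq e$, placing us in the range where sharp explicit upper bounds on $W_0$ are available.

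Finally, I would invoke the Hoorfar--Hassani upper bound on the principal branch of the Lambert function, namely
\begin{equation*}
W_0(x) \;\leq\; \log x - \log\log x + \frac{e}{e-1}\,\frac{\log\log x}{\log x} \quad\text{for } x \geq e,
\end{equation*}
applied at $x = x_0$. The first inequality in~\eqref{eq:underflow_necessary} asserts precisely that $2z$ dominates this upper bound, so $2z \geq W_0(x_0)$ and the conclusion follows. The only real subtlety, and the step I would check carefully, is verifying that the hypothesis $B \leq 2\sqrt{\pi/(2e)}$ really yields $x_0 \geq e$ uniformly in $\nu \geq 1$ (the bound is saturated exactly at $\nu = 1$), since this is what licenses the use of the explicit $W_0$ inequality; everything else is algebraic bookkeeping.
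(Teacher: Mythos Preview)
Your proposal is correct and follows essentially the same approach as the paper: bound $b_1$ via $c_0<2\nu/e$, use the upper half of Yang--Chu's inequality~\eqref{eq:yang_bounds1} together with $b_1/z\leq 1$ to obtain $K_\nu(z)<2^{\nu-1/2}K_{1/2}(z)$, then reduce $2^{\nu-1/2}K_{1/2}(z)\leq B$ to $2z\geq W_0(x_0)$ and invoke the Hoorfar--Hassani upper bound on $W_0$. The only differences are cosmetic (you run the steps in the reverse order to the paper and you spell out the verification that $x_0\geq e$, which the paper leaves implicit).
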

\noindent This necessary condition to avoid underflow for single-precision and double-precision is plotted in figure~\ref{fig:overflow_underflow}.
Proposition~\ref{prop:overflow_necessary} results in an
underflow for $z\geq \nu\log 2 + 87.6$ (resp. $z \geq \nu\log 2 + 709$) 
for single-precision (resp. double precision) with $z\geq2\nu/e\geq2/e$.
\begin{proof}
Let $\nu$, $z$ and $B$ verify the hypotheses of proposition~\ref{prop:underflow_necessary}. According to Hoorfar and Hassani's bounds on the Lambert's function $W_0$~\cite{hoorfar2008inequalities} (theorem~2.7), since $x_0\geq e$, then: 
$2z \geq W_0(x_0)$.
$W_0$ being increasing, this leads to 
\begin{equation}
   \label{eq:1proof_underflow_necessary}
    B \geq  2^{\nu-\frac12}K_{\frac12}(z)
\end{equation}
With the previous bounds on $c_0$ (inequalities~\eqref{eq:bound_c0}), $b_1$ satisfies:
$$\displaystyle
    b_1 \leq 
    \max\left\{
    \frac{2}{e}\nu,\ 
    \frac{\nu}{2}+\frac14    \right\}
    \leq z
$$
As a result, inequalities~\eqref{eq:1proof_underflow_necessary}
and~\eqref{eq:yang_bounds1} yield:
$$\displaystyle\quad
    B \geq   2^{\nu-\frac12}K_{\frac12}(z)
    \geq \left(1+\frac{b_1}{z}\right)^{\nu-\frac12}K_{\frac12}(z)
    \geq K_\nu(z)
$$
\end{proof}


\paragraph{Sufficient condition.}
A sufficient condition to avoid underflow comes directly from the monotonicity of $K_\nu$.
\begin{proposition}
\label{prop:underflow_sufficient}
For $\nu\geq\frac12$ and $\displaystyle0<B\leq\sqrt{\frac{\pi}{e}}$, if $z>0$ verifies
\begin{equation}
    \label{eq:underflow_sufficient}
    2 z\leq \displaystyle\log x_0 - \log\log x_0 + \frac{1}{2}\frac{\log\log x_0}{\log x_0}
    \quad\textrm{with}\quad \displaystyle x_0:=\frac{\pi}{B^2},
\end{equation}
then
$K_\nu(z) \geq K_{\frac12}(z) \geq B$.
\end{proposition}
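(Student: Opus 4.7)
The plan is to split the inequality $K_\nu(z)\geq B$ into two pieces: first reduce to the case $\nu=\frac12$ by monotonicity, and then handle $K_{\frac12}(z)\geq B$ using its closed form and a lower bound on the Lambert $W$ function.

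\textbf{Step 1 (reduction to $\nu=\frac12$).} For fixed $z>0$, the map $\nu\mapsto K_\nu(z)$ is increasing on $[0,\infty)$ (equivalently, $K_\nu$ is increasing in $|\nu|$). Hence, for every $\nu\geq\frac12$, one has $K_\nu(z)\geq K_{\frac12}(z)$. It therefore suffices to prove $K_{\frac12}(z)\geq B$ under the hypothesis \eqref{eq:underflow_sufficient}.

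\textbf{Step 2 (reduction to a Lambert $W$ inequality).} Using the closed form $K_{\frac12}(z)=e^{-z}\sqrt{\pi/(2z)}$, squaring and rearranging shows that $K_{\frac12}(z)\geq B$ is equivalent to
\begin{equation*}
2z\,e^{2z}\leq \frac{\pi}{B^2}=x_0,
\end{equation*}
and, since $y\mapsto y e^y$ is increasing on $[0,\infty)$ with inverse $W_0$, this is in turn equivalent to $2z\leq W_0(x_0)$.

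\textbf{Step 3 (apply a lower bound on $W_0$).} The assumption $B\leq\sqrt{\pi/e}$ gives $x_0\geq e$, which guarantees $\log x_0\geq 1$ and $\log\log x_0\geq 0$, so that the right-hand side of \eqref{eq:underflow_sufficient} is well defined. I invoke the lower bound on the principal branch of the Lambert $W$ function due to Hoorfar and Hassani~\cite{hoorfar2008inequalities} (the same reference already used in the proof of Proposition~\ref{prop:underflow_necessary}), which states that for $x\geq e$,
\begin{equation*}
W_0(x)\geq \log x-\log\log x+\tfrac{1}{2}\,\frac{\log\log x}{\log x}.
\end{equation*}
Combining this lower bound at $x=x_0$ with the hypothesis \eqref{eq:underflow_sufficient} yields $2z\leq W_0(x_0)$, which by Step~2 gives $K_{\frac12}(z)\geq B$, and Step~1 then concludes.

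\textbf{Main obstacle.} Everything hinges on having the correct direction of the Lambert $W$ inequality: the sufficient condition requires a \emph{lower} bound on $W_0$ (the companion Proposition~\ref{prop:underflow_necessary} used the matching \emph{upper} bound). Once this bound is in hand, the argument is elementary — all remaining steps are algebraic manipulation of the closed form of $K_{\frac12}$ and the monotonicity in the order $\nu$.
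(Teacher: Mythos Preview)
Your proof is correct and follows exactly the same route as the paper's own argument: reduce to $\nu=\tfrac12$ by monotonicity in the order, rewrite $K_{\frac12}(z)\geq B$ as $2z\leq W_0(x_0)$ via the closed form, and then invoke the Hoorfar--Hassani lower bound on $W_0$ (valid since $x_0\geq e$). The paper's proof is simply a terser version of your three steps.
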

\begin{proof}
Since $x_0\geq e$, according to~\cite{hoorfar2008inequalities}, inequality~\eqref{eq:underflow_sufficient} implies that
$z\leq \frac{1}{2}W_0(x_0)$. Function $W_0$ being increasing, 
$K_{\frac12}(z)\geq B$. 
\end{proof}
\noindent This sufficient condition to avoid underflow for single-precision and double precision is plotted in figure~\ref{fig:overflow_underflow}. Note that, 
As a result, $z \leq 85.3$ for single-precision and $z \leq 705$ for double-precision are sufficient condition to avoid underflow.

\paragraph{Exponential scaling.}
In practice, the use of the exponentially scaled function $\tilde K_\nu(z)$ is sufficient to avoid underflow.
\begin{proposition}
\label{prop:exp_scale}
Given a floating point arithmetic system with significand precision $P$ (in base 2),  $L$ the smallest exponent and  $U$ the largest exponent. If $L \leq -(U+1)/2$, then there is no underflow with $\tilde K_\nu(z)$ for $\nu\geq0$ and $z>0$.
\end{proposition}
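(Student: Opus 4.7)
The statement says $\tilde K_\nu(z) = e^z K_\nu(z)$ cannot fall below $B_{\textrm{UFL}} = 2^L$ for any $\nu \geq 0$ and any representable $z > 0$. Since any such $z$ satisfies $z \leq B_{\textrm{OFL}} < 2^{U+1}$, my plan reduces to establishing a pointwise lower bound on $\tilde K_\nu(z)$ and combining it with this bound on $z$ and with the hypothesis $L \leq -(U+1)/2$.

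The asymptotic $K_\nu(z) \sim \sqrt{\pi/(2z)}\,e^{-z}$ of~\eqref{eq:bessel_near_infty} suggests aiming for $\tilde K_\nu(z) \gtrsim \sqrt{\pi/(2z)}$. For $\nu \geq 1/2$ this is immediate from Miller's complete-monotonicity theorem, cited just before~\eqref{eq:yang_bounds1}: the function $z \mapsto z^{1/2} e^z K_\nu(z)$ is non-increasing on $(0, \infty)$ with limit $\sqrt{\pi/2}$ at infinity, so $\tilde K_\nu(z) \geq \sqrt{\pi/(2z)}$. Plugging $z < 2^{U+1}$ yields $\tilde K_\nu(z) > \sqrt{\pi/2} \cdot 2^{-(U+1)/2}$; the hypothesis gives $2^L \leq 2^{-(U+1)/2}$, and since $\sqrt{\pi/2} \approx 1.25 > 1$, we conclude $\tilde K_\nu(z) > 2^L$ for $\nu \geq 1/2$, with slack $\sqrt{\pi/2}$.

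The main obstacle is extending this to $0 \leq \nu < 1/2$, where the direct comparison $K_\nu \geq K_{1/2}$ fails: by log-convexity of $\nu \mapsto K_\nu$ combined with the symmetry $K_\nu = K_{-\nu}$, one has $K_\nu < K_{1/2}$ strictly. I would reduce to $\nu = 0$ via $\tilde K_\nu(z) \geq \tilde K_0(z)$, which follows from $\cosh(\nu t) \geq 1$ in the integral representation $\tilde K_\nu(z) = \int_0^\infty e^{-z(\cosh t - 1)}\cosh(\nu t)\,dt$, and then prove $\tilde K_0(z) \geq c\sqrt{\pi/(2z)}$ for some $c \geq \sqrt{2/\pi} \approx 0.798$, which is exactly the threshold beyond which the slack $\sqrt{\pi/2}$ found above absorbs the loss. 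Such a $c$ can be obtained either from the standard error bound on the truncated asymptotic expansion of $K_0$, which yields $\tilde K_0(z) \geq (1 - 1/(8z))\sqrt{\pi/(2z)}$ for $z$ bounded away from zero, or by bounding the explicit integral $\tilde K_0(z) = z^{-1/2}\int_0^\infty e^{-u}/\sqrt{u(2 + u/z)}\,du$ obtained via $u = z(\cosh t - 1)$. The remaining small-$z$ regime is harmless since $\tilde K_\nu(z) \geq K_\nu(z)$ grows without bound as $z \to 0^+$ for $\nu \geq 0$.
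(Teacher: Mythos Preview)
Your outline is correct, but it takes a longer road than the paper's proof. The paper dispenses with the case split on $\nu$ entirely: since $\nu\mapsto K_\nu(z)$ is even and increasing on $[0,\infty)$, one has $\tilde K_\nu(z)\ge \tilde K_0(z)$ for every $\nu\ge 0$, so only $\nu=0$ needs bounding. It then invokes the uniform lower bound $e^z K_0(z)>\sqrt{\pi/(2z+1/2)}$ from Yang and Zheng~\cite{yang2017approximating}, valid for \emph{all} $z>0$; combined with $z<2^{U+1}$ this gives $\tilde K_\nu(z)>\sqrt{\pi/(2\cdot 2^{U+1}+1/2)}>2^{-(U+1)/2}\ge 2^L$ in one line.

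Your detour through Miller's theorem for $\nu\ge 1/2$ is correct but unnecessary, since the monotonicity in $\nu$ you already use for $\nu<1/2$ covers that range too. More importantly, your treatment of $\tilde K_0$ splits further into large $z$ (via truncated asymptotics or an integral bound) and small $z$ (via $\tilde K_0(z)\to\infty$); this can be made rigorous, but the small-$z$ branch still requires you to produce an explicit lower bound exceeding $2^L$ on the overlap interval, which you leave implicit. The paper's single inequality $\sqrt{\pi/(2z+1/2)}$ sidesteps this entirely. Both routes arrive at the same conclusion; the paper's buys brevity and avoids the patching of regimes, while yours is more self-contained in that it does not rely on the cited Yang--Zheng bound.
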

\begin{proof}
Using the monotonicity of $\nu\mapsto K_\nu$ and the lower bounds given by~\cite{yang2017approximating}, 
since $z \leq \left(1-2^{-P}\right)2^{U+1} < 2^{U+1}$
\begin{equation*}
    \tilde K_\nu(z) > \sqrt{\frac{\pi}{2z + 1/2}} > 2^{-(U+1)/2}\geq 2^L.
\end{equation*}\end{proof}

In this section, sufficient condition and necessary conditions were given on $\nu$ and $z$ for $K_\nu(z)$ to lie within the range of positive normal floating-point numbers. These conditions are illustrated in figure~\ref{fig:overflow_underflow}.
While underflow may be avoided with exponential scaling (proposition~\ref{prop:exp_scale}), 
to avoid overflow issues, an alternative is to directly compute  the logarithm of $K_\nu(z)$ using forward recursion. This is detailed in the next section.
\begin{figure}[h!]
\centering
 \includegraphics[width=.95\textwidth, trim=11 15 10 10, clip]{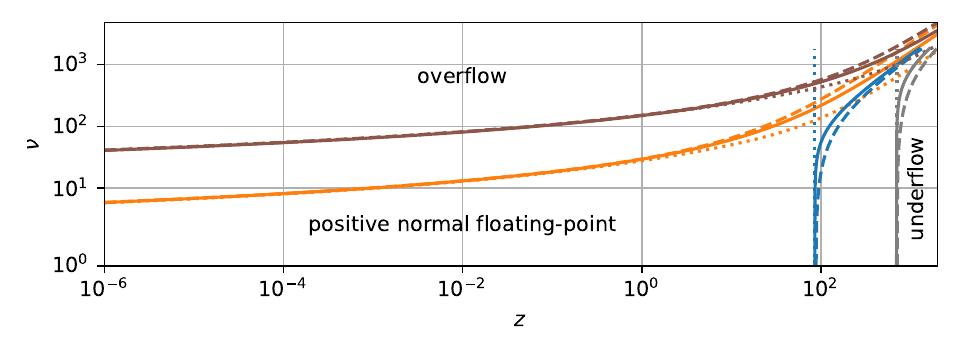}%
\caption{%
Conditions on $\nu$ and $z$ for $K_\nu(z)$ to lie with the range of positive normal floating-points. 
The conditions to avoid overflow for single-precision (resp. double-precision)
are represented in orange (resp. brown). 
The conditions to avoid underflow for single-precision (resp. double-precision)
are represented in orange (resp. gray).
Sufficient conditions 
are represented by dotted lines (inequalities~\eqref{eq:underflow_sufficient} and~\eqref{eq:overflow_sufficient}). Necessary conditions are represented by dashed lines (inequalities~\eqref{eq:underflow_necessary} and~\eqref{eq:overflow_necessary}). Solid lines correspond to numerical estimation of the frontiers by dichotomic search.}%
\label{fig:overflow_underflow}
\end{figure}

\section{{Logarithm of $K_\nu$}}
\label{sec:log}
 In this section we consider the computation of $\log K_\nu(z)$. We assume that $\nu$ and $z$ are positive real values. For $\nu<0$, one may use the fact that $K_\nu(z) = K_{-\nu}(z).$

\subsection{Recurrence Relation.}
\label{subsec:recursion}

\paragraph{Summing the logarithm of ratios.}
A straightforward way of expressing $\log K_\nu$ is by summing the logarithms of the ratios $r_\nu(z)$ defined as
\begin{equation}
    \label{eq:bessel_k_ratio}
    r_\nu(z) := \frac{K_{\nu+1}(z)}{K_{\nu}(z)} 
    =  \frac{\tilde K_{\nu+1}(z)}{\tilde K_{\nu}(z)}
\end{equation}
As a result, the logarithm of the Bessel function verifies
\begin{equation}
    \label{eq:sum_log_ratio}
    \nu-\nu_0\in\mathbb N,\ \log K_\nu = \log K_{\nu_0}(z) + \sum_{k=0}^{\nu-\nu_0-1}\log r_{\nu_0+k}
\end{equation}

The ratio $r_{\nu_0+k}$ is often evaluated using its continued fraction formulation (equations (6.6.24-27) in~\cite{press10numerical})
  \begin{equation*}
     \frac{K_{\nu+1}(z)}{K_\nu(z)}
     = 
     \frac{1}{z}\left[{
     \nu + \frac12 + x + 
     \left(\nu^2-\frac14\right)
     \frac{1}{b1+}
     \frac{a_2}{b_2+}\cdots
     }\right]
 \end{equation*}
 with 
 \begin{equation*}
     a_{n+1} = \nu^2-(n+1/2)^2 
     \quad\textrm{and}\quad
     b_n = 2(n+z),
 \end{equation*}
Another standard way of computing the ratio is the forward recursion~\cite{xue2018recursive}:
\begin{equation}
    \label{eq:ratio_rec}
    r_{\nu} = \frac{1}{r_{\nu-1}} + 2\frac{\nu}{z}
\end{equation}

When using this forward recursion,
the stability of $\log K_\nu$ as a sum of log ratio is not direct and intuitively comes from the alternating nature of the propagated error on $\log r_\nu$.
Indeed, the relative error $\epsilon_\nu$ on $r_\nu$ verifies (equation~\eqref{eq:ratio_rec}):
\begin{equation}
    \label{eq:error_rec}
    \epsilon_{\nu+1} = \frac{-\epsilon_{\nu}}{1+\epsilon_{\nu}}
    \frac{K_\nu(z)}{K_{\nu+2}(z)}
\end{equation}

As a matter of fact, on may dramatically simplify the stability analysis by noticing that summing the logarithm of the ratio when the latter is computed by forward recursion is equivalent in term of error propagation to directly computing $\log K_\nu$ with the following second-order recurrence relation.

\paragraph{Forward Recursion.}
The recurrence relation~\eqref{eq:bessel_recurrence} verified by the Bessel function $K_\nu$ can directly be adapted to its logarithm.

\noindent
Noting
\begin{equation}
    u_{\nu} := \log K_{\nu}(z),
\end{equation} 
the second-order recurrence relation~\eqref{eq:bessel_recurrence} becomes:
\begin{equation}
    \label{eq:bessel_recurrence_log}
    u_{\nu+1} = u_{\nu-1} + \log\left({1+ \frac{2\nu}{z}\exp(u_{\nu}-u_{\nu-1})
    }\right).
\end{equation}



\paragraph{Choosing $\nu_0$.}
 $\nu_0$ is chosen, small enough for $K_{\nu_0}(z)$ and $K_{\nu_0+1}(z)$ to be computed with existing libraries with existing numerical libraries~\cite{amos1985subroutine,amos1985subroutine,gil2007numerical} while ensuring numerical stability.
As detailed in the following section, numerical stability required 
$u_{\nu_0}>0$. Hence, if $u_{\nu}\leq0$, then $\log K_{\nu}(z)$ is directly computed with existing numerical libraries. Otherwise, one may numerically choose 
\begin{equation}
    \nu_0 := \nu - \left\lceil \nu-\frac12 \right\rceil + k
\end{equation}
where $k\in\mathbb N$ is the smallest integer such that $u_{\nu_0}>0$. Propositions~\ref{prop:overflow_necessary} and~\ref{prop:overflow_sufficient} reduce the search range.

To avoid underflow, the exponentially scaled function $\tilde K_\nu(z)$ (equation~\eqref{eq:exp_scaled_bessel}) is used to compute the first two terms of the recurrence.

Note that these recurrence relations hold for complex numbers by considering the congruence modulo $2\pi$ for the logarithm to be properly defined:
$$
 z \leftarrow \operatorname{Re}(z) + \left(\operatorname{Im}(z) \mod 2\pi\right)   i
$$



\subsection{Numerical Stability}
\label{subsec:stability}
In this section we consider the numerical stability of the forward recursion (equation~\eqref{eq:bessel_recurrence_log}.

Noting, $U_{\nu +1} := \left(u_{\nu+1}, u_{\nu}\right)$, the  relation~\eqref{eq:bessel_recurrence_log} can be rewritten as a first-order recurrence relation:
\begin{equation}
    U_{\nu +1} = f\left(U_{\nu}\right)
    \quad\textrm{with}\quad
        f: (x,y) \mapsto 
     y + \left({\log\left(1 + \frac{2\nu}{z}e^{x-y}\right)
    ,x}\right)
\end{equation}
The condition number for the infinity norm (e.g.~\cite{higham2002accuracy}) is then given by:
\begin{equation}
    \label{eq:condition_number}
    \kappa_\infty(x, y) := \frac{\|J(x,\ y)\|_\infty\cdot\|\mathbf (x,y)\|_\infty}{
    \|f(x,y)\|_\infty}
\end{equation}
where $J$ is the \textit{Jacobian} matrix. Considering that, in this case, its norm $\|J(x,\ y)\|_\infty$ is simply one, for positive $y$, the condition number is lower or equal to 1:
$
    \kappa_\infty(x, y) \leq 1
$.
Hence, since $f$ is a non-decreasing function, the forward recursion is stable.

As explained in~\cite{higham2002accuracy}, another risk that may affect the performance of the algorithm is the \textit{catastrophic cancellation} which occurs when subtracting nearby numbers (with respect to the machine precision). The term that might be affected in the relation~\eqref{eq:bessel_recurrence_log} is $u_{\nu}-u_{\nu-1}$ which corresponds to the log of the ratio $\frac{K_\nu}{K_{\nu-1}}$. Sufficient conditions on $\nu$ and $z$ to avoid catastrophic cancellation could be derived from inequalities~\eqref{eq:yang_bounds1} and~\eqref{eq:segura_bounds}. We do not details it in this paper. In practice this does not happen with the usual range of values.

\subsection{Sufficient conditions of normality.}
A few inequalities ensure that there is no underflow and overflow issues with the computation of $u_\nu$. First, based on theorem~1 and corollary~3 in~\cite{segura2011bounds}, for $z>0$ and $\nu\geq-\frac12$, 
\begin{equation}
    \label{eq:segura_bounds}
    \frac{\nu + \sqrt{\nu^2 + z^2}}{z}
    <
    \exp\left({ u_{\nu +1} - u_{\nu} }\right)
    \leq
    \frac{\nu+\frac12+\sqrt{(\nu+\frac12)^2+z^2}}{z}
\end{equation}
Hence, for $z>0$, if $0\leq \nu\leq\frac12\left(B_\textrm{OFL} -1 \right)\cdot z-\frac12$, there is neither overflow nor underflow for $\exp\left({ u_{\nu +1} - u_{\nu} }\right)$.

As for $u_\nu(z)$, the variable being signed, we do not consider underflow. Instead, we looked for sufficient conditions for $u\in[-B_\textrm{OFL}, B_\textrm{OFL}]$. While tight bounds can be obtained using proposition~\ref{prop:overflow_sufficient} and~\ref{prop:underflow_sufficient}, simpler bounds are sufficient. 
The fact that $u\geq -B$ can be proved using the monotonicity of $K_\nu$ and 
theorem~3.1 in \cite{yang2017approximating}:
\begin{equation}
    \forall\nu\geq0,\quad
    u_\nu(z) ~\geq~
    u_0(z) ~\geq~
    -z + \frac12\log\frac{\pi}{2z+1/2}
    ~\geq~ -2z
\end{equation}

As for the upper bound, with the monotonicity of $K_\nu$, one can assume that for $\nu \geq \frac{e^2}{2(e-2)}$, and $z\leq1$. 
\begin{lemma}
For $\nu \geq \frac{e^2}{2(e-2)}$ and $z\leq1$,
\begin{equation}
    \label{eq:lemma_bound}
     K_\nu(z)
    <
    \left(\frac{\nu}{z}\right)^{\nu}    
\end{equation}
\end{lemma}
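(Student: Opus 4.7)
The plan is to apply the Yang--Chu upper bound~\eqref{eq:yang_bounds1} together with the estimate on $c_0$ from~\eqref{eq:bound_c0}, and then reduce the claim $K_\nu(z) < (\nu/z)^\nu$ to a $z$-independent one-dimensional inequality in $\nu$.

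First, I would show that the hypothesis $\nu \geq \tfrac{e^2}{2(e-2)}$ implies $b_1 \leq \tfrac{2\nu}{e}$. Since $b_1 = \max\{c_0,\,\nu/2+1/4\}$, the bound $c_0 < \tfrac{2\nu}{e}$ from~\eqref{eq:bound_c0} and the elementary observation that $\nu/2 + 1/4 \leq \tfrac{2\nu}{e}$ holds whenever $\nu \geq \tfrac{e}{2(4-e)}$ (easily implied by our hypothesis) together give $b_1 \leq \tfrac{2\nu}{e}$. Plugging this into~\eqref{eq:yang_bounds1} and using $K_{1/2}(z) = e^{-z}\sqrt{\pi/(2z)}$, I obtain $K_\nu(z) < e^{-z}\sqrt{\pi/(2z)}\bigl(1 + \tfrac{2\nu}{ez}\bigr)^{\nu - 1/2}$.

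Next, I would factor $1 + \tfrac{2\nu}{ez} = \tfrac{2\nu}{ez}\bigl(1 + \tfrac{ez}{2\nu}\bigr)$ and apply the elementary inequality $(1+x)^y \leq e^{xy}$ to the second factor, yielding $\bigl(1 + \tfrac{ez}{2\nu}\bigr)^{\nu-1/2} \leq e^{ez/2}$. Using $z \leq 1$ and $e/2 > 1$, the combined exponential factor is bounded by $e^{-z + ez/2} \leq e^{e/2-1}$, so that $K_\nu(z) < e^{e/2-1}\sqrt{\pi/(2z)}\,\bigl(\tfrac{2\nu}{ez}\bigr)^{\nu-1/2}$. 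Dividing both sides by $(\nu/z)^\nu$, the dependence on $z$ cancels and the claim reduces to $g(\nu) := e^{e/2-1}\sqrt{\pi/(2\nu)}\,(2/e)^{\nu-1/2} < 1$. I would then verify that $g$ is strictly decreasing by computing $(\log g)'(\nu) = -1/(2\nu) - \log(e/2) < 0$, and check $g(\nu_0) < 1$ at $\nu_0 = \tfrac{e^2}{2(e-2)}$ by direct numerical evaluation, which by monotonicity extends to all $\nu$ in the hypothesis range.

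The main obstacle is conceptual rather than technical: the specific threshold $\nu_0 = \tfrac{e^2}{2(e-2)}$ does not arise naturally from this route, since $g(\nu) < 1$ holds well below that value. This suggests the authors may invoke a tighter intermediate estimate (for example, preserving $c_0$ via its explicit lower bound in~\eqref{eq:bound_c0} rather than replacing it by $2\nu/e$, or avoiding the step $(1+x)^y \leq e^{xy}$ by a sharper convex inequality), or alternatively that the particular constant is calibrated to feed cleanly into the subsequent argument controlling $u_\nu$ on $[-B_{\mathrm{OFL}}, B_{\mathrm{OFL}}]$.
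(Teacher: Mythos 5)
Your proof is correct, but it follows a different branch of the Yang--Chu estimates than the paper does. You start from the large-$z$ inequality \eqref{eq:yang_bounds1}, bound $b_1\le\frac{2\nu}{e}$ via the upper bound on $c_0$ in \eqref{eq:bound_c0} (the hypothesis $\nu\ge\frac{e^2}{2(e-2)}>\frac{e}{2(4-e)}$ indeed covers the comparison $\nu/2+1/4\le\frac{2\nu}{e}$ and the validity range $\nu\ge1$), strip off the factor $\bigl(1+\tfrac{ez}{2\nu}\bigr)^{\nu-1/2}\le e^{ez/2}$, note that the $z$-dependence cancels against $(\nu/z)^{\nu}$, and close by monotonicity of $g(\nu)=e^{e/2-1}\sqrt{\pi/(2\nu)}\,(2/e)^{\nu-1/2}$ plus one numerical evaluation; all of these steps are sound. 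The paper instead uses the small-$z$ inequality \eqref{eq:yang_bounds2}: the lower bound on $c_0$ in \eqref{eq:bound_c0} controls $b_2$ by $\frac{e^2}{4\nu}$, and the Karatsuba bound \eqref{eq:gamma_bounds} replaces $\Gamma(\nu)$, giving $K_\nu(z)<\frac{\sqrt\pi}{2}\left(\frac{\nu}{e}\right)^{\nu-\frac12}\left(\frac{2}{z}\right)^{\nu}\bigl(1+\tfrac{ze^2}{4\nu}\bigr)^{\nu-\frac12}$. This also answers your puzzlement about the threshold: $\nu\ge\frac{e^2}{2(e-2)}$ is precisely the condition ensuring $1+\frac{ze^2}{4\nu}\le\frac{e}{2}$ for $z\le1$, so that factor is absorbed by $(2/e)^{\nu}$ and the remaining constant collapses to $\sqrt{\pi/(2\nu)}<1$ with no numerical check. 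In short, your route trades sharpness as $z\to0$ (the extra slack is uniform in $z$ and cancels anyway) for a fully explicit, easily verifiable closing argument, while the paper's route is the natural one for $z\le1$, stays tight near the origin, and makes the constant $\frac{e^2}{2(e-2)}$ appear organically, at the price of a terser final step.
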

\begin{proof}
Using equations~\eqref{eq:bound_c0} and~\eqref{eq:gamma_bounds} in \eqref{eq:yang_bounds2} yields:
\begin{equation*}
K_\nu(z) <
    \frac{\sqrt{\pi}}{2}
    \left({\frac\nu{e}}\right)^{\nu-\frac12}
    \left(\frac 2z\right)^\nu 
    \left({1+z \frac{e^2}{4\nu}
        }\right)^{\nu-\frac12}
\end{equation*}
Since $z\leq1$ and $\nu \geq \frac{e^2}{2(e-2)}$, inequality~\eqref{eq:lemma_bound} is obtained.
\end{proof}
Hence, based on~\cite{hoorfar2008inequalities}, a sufficient condition to avoid overflow of $u_\nu$ is given by the proposition~\ref{prop:u_sufficient}.
\begin{proposition}
\label{prop:u_sufficient}
Given $B>0$, for $\nu\leq0$ and $0<z<B$, if 
\begin{equation}
    \max\left\{\nu,\ \frac{e^2}{2(e-2)},\right\} \leq B +y\log\left(\frac yB\right)
    \quad\textrm{with}\quad y:=\min\{ z,\  1\}
\end{equation}
then $u(z)<B$.
\end{proposition}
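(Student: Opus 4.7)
My plan is to chain together three tools already in play in this section: monotonicity of $K_\nu$, the Lemma just proved, and the Hoorfar--Hassani inequalities for the Lambert $W$ function cited in the sentence immediately preceding the proposition.

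First, I would justify the two $\max/\min$ operations appearing in the hypothesis by monotonicity. Since $\nu\mapsto K_\nu(z)$ is non-decreasing on $[0,\infty)$ for fixed $z>0$ and $z\mapsto K_\nu(z)$ is decreasing for fixed $\nu\geq 0$, setting $N:=\max\{\nu,\,e^2/(2(e-2))\}$ and $y:=\min\{z,1\}$ gives $K_\nu(z)\leq K_N(y)$, so it suffices to prove $u_N(y)<B$. This is precisely why the constant $e^2/(2(e-2))$ and the truncated variable $y$ show up in the hypothesis: they pin down the regime in which the Lemma applies.

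Second, with $N\geq e^2/(2(e-2))$ and $y\leq 1$ in hand, the Lemma yields $u_N(y)<N\log(N/y)$, and the problem collapses to the purely real-variable implication ``$N\leq B+y\log(y/B)\Rightarrow N\log(N/y)\leq B$''. I would handle this last step via Lambert $W$: substituting $s:=N/y$ converts $N\log(N/y)\leq B$ into $s\log s\leq B/y$, i.e. $\log s\leq W_0(B/y)$, i.e. $N\leq B/W_0(B/y)$. It then remains to invoke a suitable Hoorfar--Hassani bound on $W_0$ from \cite{hoorfar2008inequalities} and rearrange it into the closed form $B+y\log(y/B)$.

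The hard part is this final algebraic conversion. The sharp Hoorfar--Hassani bounds on $W_0$ come with iterated-logarithm corrections, and squeezing them to match the clean expression $B+y\log(y/B)$ is where care is needed, probably through a judicious choice of bound (for instance $W_0(x)\leq \log(1+x)$ for $x\geq 0$, or the tighter bound already used in the proof of Proposition~\ref{prop:underflow_necessary}). The monotonicity reduction and the application of the Lemma are, in comparison, routine; one should however verify along the way that the hypothesis $B/y\geq e$ needed by the chosen Hoorfar--Hassani inequality is implied by $0<z<B$ and $y\leq 1$, recovering any boundary case by continuity.
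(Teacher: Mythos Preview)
Your plan is exactly the paper's: the paper offers no formal proof of this proposition, only the one-line justification ``Hence, based on~\cite{hoorfar2008inequalities}\ldots'' immediately after the Lemma, and your three steps (monotonicity reduction to $N=\max\{\nu,e^2/(2(e-2))\}$, $y=\min\{z,1\}$; apply the Lemma to get $u_N(y)<N\log(N/y)$; invert via Lambert~$W$) reconstruct precisely that sketch. Your reduction to the real-variable implication
\[
N\le B+y\log(y/B)\ \Longrightarrow\ N\log(N/y)\le B
\]
is correct, and you are right to flag the final conversion as the delicate step.

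The trouble is that this implication is not merely delicate: it is false, and no Hoorfar--Hassani bound will close it. Take $B=100$, $y=z=1$, $\nu=N=95$. Then $B+y\log(y/B)=100-\log 100\approx 95.4\ge 95$ and $e^2/(2(e-2))\approx 5.14\le 95.4$, so the hypothesis holds; yet $N\log(N/y)=95\log 95\approx 433\not\le 100$, and in fact $\log K_{95}(1)\approx 400>100$, so the proposition itself fails for these values. The genuine sufficient condition coming out of your Lambert-$W$ step is $N\le B/W_0(B/y)$, which for large $B/y$ behaves like $B/\log(B/y)$, not like $B+y\log(y/B)\approx B$. So your instinct that ``squeezing them to match the clean expression $B+y\log(y/B)$ is where care is needed'' was sound; the gap is in the stated proposition (and the paper's corollary derived from it), not in your argument.
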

\noindent
As result, a sufficient condition for $u_\nu(z)$ to avoid overflow is,  to have:
 \begin{equation}
     \nu\leq B_{\textrm{OFL}} +B_{\textrm{UFL}}\log\left(\frac{B_\textrm{UFL}}{B_\textrm{OFL}}\right), 
 \end{equation}
which is the case with existing floating-point arithmetic systems.

\subsection{Logarithm of $I_\nu$}

Computing $\log K_\nu$ might be used to evaluate the logaritm 
$\log I_\nu$ of  the \textit{modified Bessel function of the first kind} using the Wronskian identity~\cite{abramowitz1964handbook}:
\begin{equation}
    \label{eq:wronskian}
    \mathcal W\left (I_\nu(z),\ K_\nu(z)\right) = -\frac{1}{z}
    .
\end{equation}

\noindent Indeed, using the recurrence relations~\cite{abramowitz1964handbook},
\begin{eqnarray}
\displaystyle I'_\nu(z) &=& \displaystyle 
\frac{\nu}{z}I_\nu(z) + I_{\nu+1}(z) \\
\displaystyle K'_\nu(z) &=&  \displaystyle 
\frac{\nu}{z}K_\nu(z) - K_{\nu+1}(z) 
\end{eqnarray}
the Wronskian identity (equation~\eqref{eq:wronskian}) becomes:
\begin{equation}
        \label{eq:log_from_wronskian}
        \log I_\nu(z) = 
        - \log z
        -
        \log K_\nu(z)
        - \log\left({
        \frac {I_{\nu+1}(z)}{I_{\nu}(z)}+
        \frac {K_{\nu+1}(z)}{K_{\nu}(z)}
        }\right)
\end{equation}
Where the ratio $\frac {I_{\nu+1}(z)}{I_{\nu}(z)}$ may be evaluated either as a continued fraction (equation (6.6.21) 
 in\cite{press10numerical})
 \begin{equation*}
     \frac{I_{\nu+1}(z)}{I_\nu(z)}
     = 
     -\frac{\nu}{z}
     +
     \frac{1}{2(\nu+1)/z+}
     \frac{1}{2(\nu+2)/z+}
     \cdots
 \end{equation*}
 or using a backward recursion~\cite{xue2018recursive}.

 As detailed in this section, to avoid avoid overflow for $K_\nu(z)$
 a direct implementation of $\log K_\nu(z)$ based on a forward recursion might be used. The numerical stability of the recursion has been assessed. A numerical illustration is given in the next section.
 

\section{Illustration: computing the characteristic function of  Student's \textit{t}-Distribution.}
\label{sec:illustration}

In this section, $\log K_\nu$ is used to compute the characteristic function (section~\ref{subsec:gil_pelaez}) of Student's $t$-distribution  (section~\ref{subsec:student}). The computation's accuracy is indirectly assessed comparing the 
numerical evaluation of the \textit{probability density function} (\textit{pdf}) with the Gil-Pelaez formula (equation~\eqref{eq:cf_inversion} to 
evaluation with the closed-form formulation (section~\ref{subsec:illustration}).

\subsection{Characteristic functions and the Gil-Pelaez inversion formulae}
\label{subsec:gil_pelaez}
Characteristic functions are a power tool to analyze probability distributions. 
For a real-valued random variable $X$ with a probability density function $f_X$ and a cumulative distribution function $F_X$, the characteristic function $\varphi_X$ is the Hermitian function defined as:
\begin{equation}
    \begin{array}{cccl}
        \varphi_{X}: & \mathbb {R} &\to & \mathbb {C}\\
        &t&\mapsto & \displaystyle\operatorname {E} 
        \left[e^{itX}\right]
        =\int _{\mathbb {R} }e^{itx}\,dF_{X}(x)
        =\int _{\mathbb {R} }e^{itx}f_{X}(x)\,dx
    \end{array}
\end{equation}

It is especially useful to study linear combination of independent random variables. Indeed, the characteristic function of the 
sum of independent variables $X$, $Y$ verifies $\varphi_{X+Y} = \varphi_{X}\varphi_{Y}$. As for affined transformations, the characteritic function of $aX+b$ verifies
$\varphi_{aX+b}(t) = \varphi_X(at)e^{itb}$ with $(t, a, b)\in\mathbb R^3$.

Theses properties together with the inversion formula 
\begin{equation}
    \label{eq:cf_inversion}
    f_{X}(x)=\frac {1}{2\pi}\int _{\mathbf {R} }e^{-itx}\varphi _{X}(t)\,dt
    =\frac {1}{\pi}\int _{0}^{\infty} \operatorname {Re}\left[{ e^{-itx}\varphi _{X}(t)}\right]\,dt,
\end{equation}
 and a direct corollary named the Gil-Pelaez Inversion formulae, \cite{gil1951note}:
\begin{equation}
    \label{eq:cf_gil_pelaez}
    F_{X}(x)
    =\frac {1}{2}-\frac {1}{\pi }
    \int_{0}^{\infty} \frac{\operatorname{Im} \left[{e^{-itx}\varphi _{X}(t)}\right]}{t}\,dt
\end{equation}
makes it also an interesting numerical
tool~\cite{davies1973numerical,hughett1998error,witkovsky2016numerical}.

\subsection{Characteristic function of  Student's \textit{t}-Distribution}
\label{subsec:student}
The characteristic function $\phi_{\nu}$ of a Student's $t$-distribution with $\nu$ degrees of freedom has a closed-form. It is an even real-valued function verifying~\cite{hurst1995characteristic,dreier2002note,gaunt2019simple}:
\begin{equation}
\label{eq:student_characteristic}
\phi_\nu(t) =
\frac{K_{\nu/2} \left(\sqrt{\nu}|t|\right)
                    \cdot \left(\sqrt{\nu}|t| \right)^{\nu/2}}
                    {\Gamma(\nu/2)2^{\nu/2-1}} 
\end{equation}
where $K_\nu(z)$ is the modified Bessel function of the second kind and $\Gamma$ is the Gamma function~\cite{abramowitz1964handbook}.
Direct evaluation of the characteristic function $\phi_\nu(t)$  for large number of degrees of freedom $\nu\gg1$ or near zero ($t\ll1$) can cause a significant loss of precision.
As a matter of facts, equation~\eqref{eq:student_characteristic} contains a ratio between very large numbers when $\nu\gg1$ and a product between a very large number and a very small number for $|t|\ll1$ (cf. section~\ref{sec:under_over}).

To alleviate the notations, let $\psi_\nu$ be defined as,
\begin{equation}
    \label{eq:def_psi}
    \psi_\nu(z) :=
    \frac{K_{\nu}(z)}{\Gamma(\nu)}\left(\frac{z}{2} \right)^\nu,
    \ \nu >0,\ z \geq 0
\end{equation}
The characteristic function $\phi_\nu$ of a Student's distribution is then simply expressed as: 
\begin{equation}
    \label{eq:phi_from_psi}
    \phi_\nu(t) = 
    2 \psi_{\nu/2}(\sqrt\nu|t|)
\end{equation}
A standard way to handle these numerical issues is to first compute the logarithm of the different terms:
\begin{equation}
    \label{eq:psi_log}
    \psi_\nu(z) := \exp\left({
    \log K_{\nu}(z) - \log\Gamma(\nu) + \nu\log \left(\frac{z}{2} \right)
    }\right),
    \ \nu >0,\ z \geq 0
\end{equation}
where $\log K_\nu$ is computed as described in section~\ref{subsec:recursion} while the function $\log\Gamma(x)$ is already implemented in most numerical libraries~\cite{2020SciPy-NMeth,ISO14882}.

\subsection{Numerical illustration}
\label{subsec:illustration}

To assess the computation's accuracy of the 
characteristic function of the Student's $t$-distribution~
\eqref{eq:student_characteristic}, the probability density function $p$ 
was numerically  evaluated  with the Gil-Pelaez formula (equation~\eqref{eq:cf_inversion}  and compared to its closed form formulation: 
\begin{equation}
\label{eq:pdf}
p(x) = \frac{\Gamma \left(\frac{\nu+1}{2} \right)} {\sqrt{\nu\pi}\,\Gamma \left(\frac{\nu}{2} \right)} \left(1+\frac{x^2}{\nu} \right)^{-\frac{\nu+1}{2}}
\end{equation}

The Gauss–Kronrod quadrature formula implemented in the QUADPACK library~\cite{piessens2012quadpack} (with default parameters) was used for numerical integration. 

We compared the accuracy when computing the characteristic function directly using equation~\eqref{eq:def_psi} (\textit{Direct}), 
using using equation~\eqref{eq:psi_log} with the Bessel function evaluated using~\cite{amos1985subroutine} (\textit{LogDirect}) and using equation~\eqref{eq:psi_log} with the logarithm of the Bessel function evaluated as detailed in section~\ref{subsec:recursion}.
Overflows values near 0 were replaced with 1, which correspond to a first order Taylor's series approximation.
Results are presented on figures~\ref{fig:err_pdf_float} and  and~\ref{fig:err_pdf_double} for single and double precision respectively.

\section{Conclusion}
In a nutshell, this note proposed a simple and stable recurrence relation to compute the logarithm of the modified Bessel function of the second kind for real number. It detailed how this could avoid underflow and overflow. 

\begin{figure}[htbp]
    \centering
    \includegraphics[width=\textwidth, trim=20 55 20 60, clip]{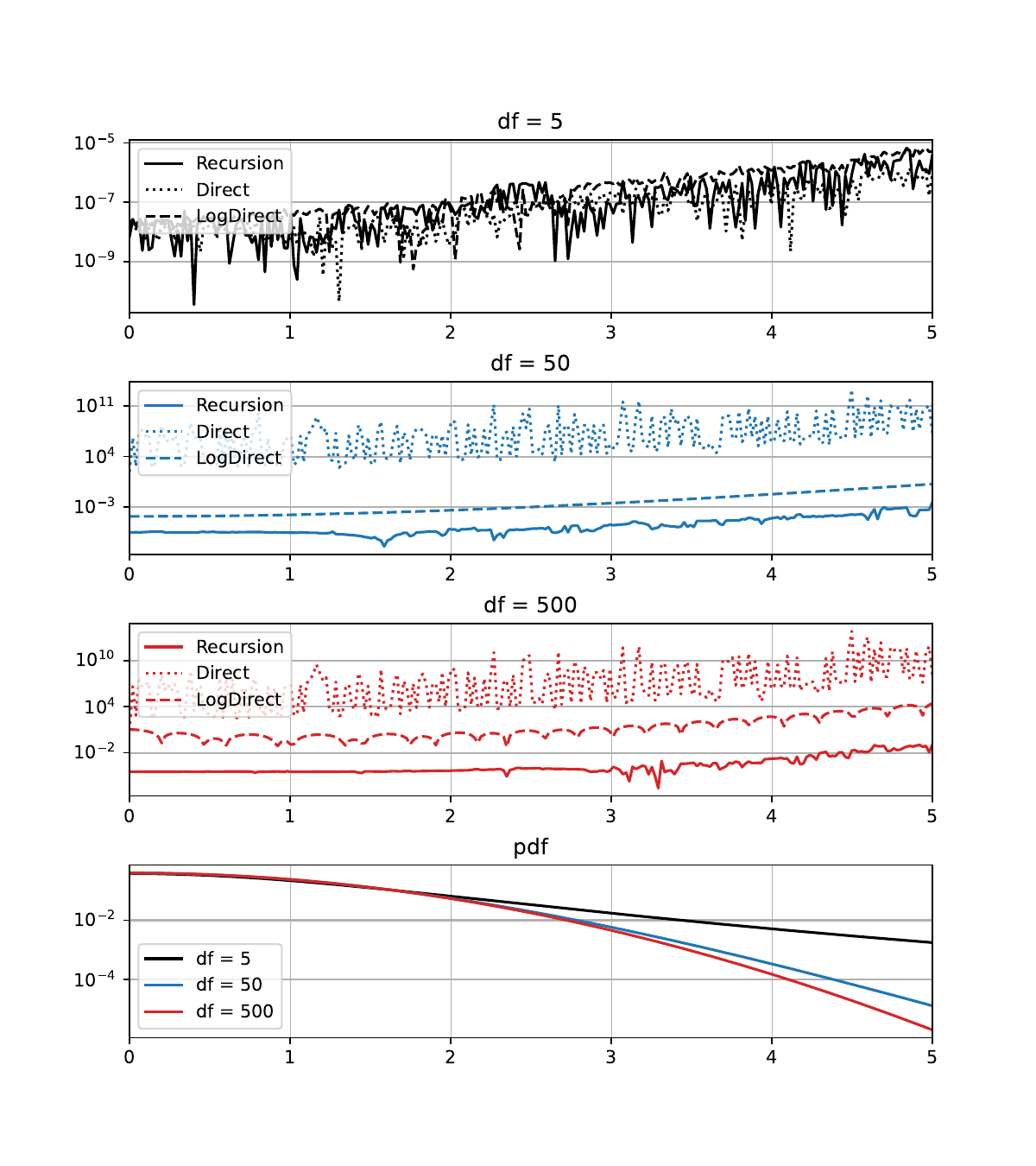}
    \caption{%
    Accuracy of the numerical evaluation of the probability density function of the Student's $t$-distribution with different degrees of freedom (df) with \textbf{single-precision}. 
    The characteristic function was computed either using equation~\eqref{eq:def_psi} (\textit{Direct}), 
or using equation~\eqref{eq:psi_log} with the Bessel function from~\cite{amos1985subroutine} (\textit{LogDirect}) or using equation~\eqref{eq:psi_log} and the logarithm of the Bessel function evaluated as detailed in section~\ref{subsec:recursion}.
Overflows values near 0 were replaced with 1, which correspond to a first order Taylor's series approximation.}
    \label{fig:err_pdf_float}
\end{figure}

\begin{figure}[htpb]
    \centering
    \includegraphics[width=\textwidth, trim=20 55 20 60, clip]{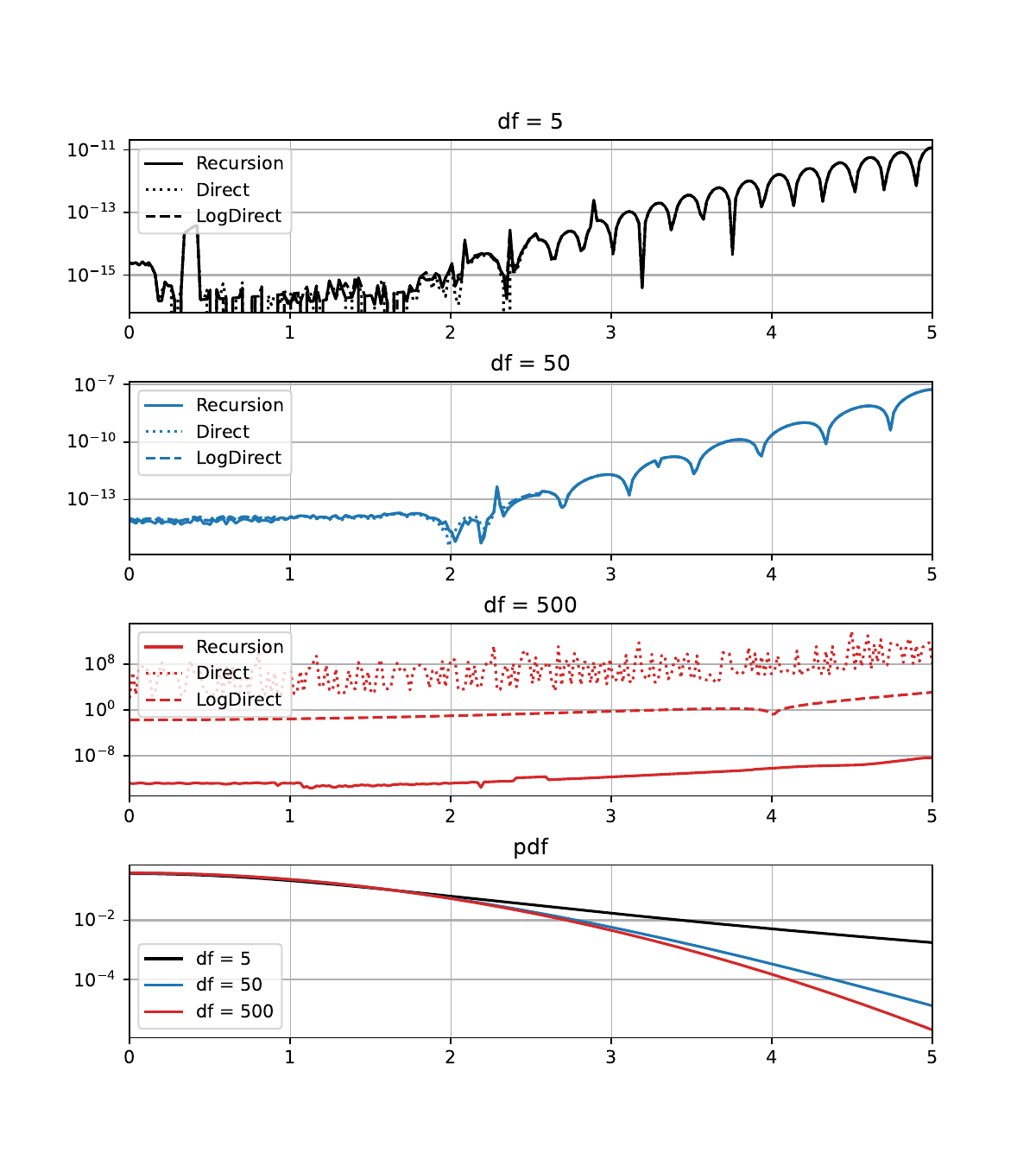}
    \caption{Accuracy of the numerical evaluation of the probability density function of the Student's $t$-distribution with different degrees of freedom (df) with \textbf{double-precision}. 
    The characteristic function was computed either using equation~\eqref{eq:def_psi} (\textit{Direct}), 
or using equation~\eqref{eq:psi_log} with the Bessel function from~\cite{amos1985subroutine} (\textit{LogDirect}) or using equation~\eqref{eq:psi_log} and the logarithm of the Bessel function evaluated as detailed in section~\ref{subsec:recursion}.
Overflows values near 0 were replaced with 1, which correspond to a first order Taylor's series approximation.}
    \label{fig:err_pdf_double}
\end{figure}



%
%


\newpage
~\newpage
\bibliography{references}
%
%

\end{document}